\newtheorem{theorem}{Theorem}
\newtheorem{proposition}{Proposition}
\newtheorem{lemma}{Lemma}
\newtheorem{corollary}{Corollary}
\DeclareMathOperator*{\argmin}{arg\,min}
\title{\LARGE \bf
Local Water Storage Control for the Developing World
}
\author{Yonatan Mintz, Zuo-Jun Max Shen, and Anil Aswani
\thanks{This work was supported in part by the Philippine-California Advanced Research Institutes (PCARI) and NSF Award CMMI-1450963.}
\thanks{Y. Mintz, Z.-J. Shen, and A. Aswani are with the Department of Industrial Engineering and Operations Research, University of California, Berkeley, CA 94720, USA 
        {\tt\small ymintz@berkeley.edu, maxshen@berkeley.edu, aaswani@berkeley.edu}}%
}
\begin{document}

\maketitle
\thispagestyle{empty}
\pagestyle{empty}

\begin{abstract}

Most cities in India do not have water distribution networks that provide water throughout the entire day. As a result, it is common for homes and apartment buildings to utilize water storage systems that are filled during a small window of time in the day when the water distribution network is active.  However, these water storage systems do not have disinfection capabilities, and so long durations of storage (i.e., as few as four days) of the same water leads to substantial increases in the amount of bacteria and viruses in that water.  This paper considers the stochastic control problem of deciding how much water to store each day in the system, as well as deciding when to completely empty the water system, in order to tradeoff: the financial costs of the water, the health costs implicit in long durations of storing the same water, the potential for a shortfall in the quantity of stored versus demanded water, and water wastage from emptying the system.  To solve this problem, we develop a new Binary Dynamic Search (BiDS) algorithm that is able to use binary search in one dimension to compute the value function of stochastic optimal control problems with controlled resets to a single state and with constraints on the maximum time span in between resets of the system.

\end{abstract}

\section{INTRODUCTION}
Safe drinking water is not widely available in much of the developing world. For instance, the majority of cities in India do not have continuous water supply within the existing distribution infrastructure, and on average citizens get water for a 3 to 4 hour period during the day \cite{bosch2006,um2007}. As a result, most households use communal and personal water storage containers to ensure adequate water supply throughout the day; these water storage containers are filled every 1 or 2 days during periods of water availability \cite{um2007,franceys2010}.  However, water stored in these containers often becomes contaminated \cite{zerah2000,evison2001,coelho2003controlling}:  Longer durations of storing the same water leads to increased amounts of bacterial and viral pathogens. 

In this paper, we formulate the problem of deciding how much water to store each day in a local water storage container as a stochastic optimal control problem.  The challenge is how to tradeoff the financial costs of purchasing the water, the degradation in water quality (and associated implicit health costs) from long storage durations, the potential for a shortfall in the quantity of stored versus demanded water, and water wastage.  The inherent uncertainty in daily water demand makes our choice of stochastic optimal control natural.  Furthermore, our goal is to construct a control policy that is easily implementable via a lookup table that could be distributed through paper pamphlets or internet websites.

In addition to studying the stochastic optimal control problem associated with local water storage container, our second contribution in this paper is to develop a new algorithm for solving stochastic optimal control problems with the specific structure of controlled resets to a single state and with constraints on the maximum time in between system resets.  Unlike value and policy iteration, which require finding a fixed point in an infinite dimensional functional space \cite{bertsekas1995}, we develop a Binary Dynamic Search (BiDS) algorithm that converts the control problem into finding a fixed point in a vector-space using binary search.  Though value and policy iteration converge exponentially, the convergence rate can be practically slow when the discount factor is close to 1 \cite{bertsekas1995,zidek2016stochastic}.  Our BiDS algorithm can compute the value functions of problems with controlled resets to a single state using substantially less computation because it uses binary search. 

\subsection{Energy Storage Control}

Though local water storage is nominally similar to control of (mainly electrical) energy storage \cite{bitar2011role,moura2011stochastic,taylor2013competitive,liu2013planning,tang2014dynamic,zhang2015stochastic,qin2016submodularity,li2016mean}, there are several differences in the underlying physics and domain settings that make the two problems very distinct.  The biggest difference is that stored energy dissipates (or decays) in quantity over time, whereas local water storage tanks are typically closed and hence have minimal water evaporation \cite{zerah2000,evison2001,coelho2003controlling}.  Another important contrast is that the quality of stored water decays \cite{zerah2000,evison2001,coelho2003controlling}, whereas the quality of stored energy is directly related to the stored quantity.  Because the quality of stored water decays, our formulation considers the possibility of flushing all water from the storage container; in contrast, energy storage does not have a corresponding control action to empty the storage.

\subsection{Inventory and Supply Chain Management}

Our problem of local water storage control is also similar to inventory and supply management problems \cite{wagner1958,scarf1959}.  More specifically, it is related to the setting of managing perishable inventory \cite{hsu2000,hsu2001,pierskalla1972,rosenfield1989,rosenfield1992,shen2011}, which has been mainly explored in the context of healthcare inventory management. In these perishable inventory models, it is assumed that expired units of inventory can be disposed of individually, which allows salvage and disposal costs to be incorporated into the inventory's holding cost; hence, policies of strategic inventory removal can be employed \cite{pierskalla1972,rosenfield1989,rosenfield1992,shen2011}. However, in our setting, since fresh water may mix with still water, individuals cannot dispose of single units of their stock strategically. Instead, individuals  must  make a decision of either keeping or disposing of their entire stock of water to reduce the risk of drinking contaminated water.

\subsection{Numerical Solution of Stochastic Optimal Control}

The workhorse for numerical solution of stochastic optimal control is value and policy iteration \cite{bertsekas1995}, which require finding a fixed point in an infinite dimensional space.  For high-dimensional state spaces, exact calculation for these approaches is numerically difficult (i.e., the so-called \emph{curse of dimensionality}).  And so a number of approximate dynamic programming approaches \cite{bertsekas1995,de2003linear,powell2007approximate,ryzhov2012knowledge,kariotoglou2013approximate,haskell2016empirical} have been developed, where the main idea is that approximate dynamic programming performs computation with an inexact but tractable representation of the value function or policy.  

However, the convergence rate of (exact or approximate) value and policy iteration is governed by the discount factor, and so convergence is slow for discount factors close to 1 \cite{bertsekas1995,zidek2016stochastic}.  This motivates development of our BiDS algorithm, which uses the special structure of stochastic control problems with controlled resets to a single state and with a constraint on the amount of time in between resets.  Our BiDS algorithm finds a fixed point in a vector-space using binary search.  Since the state space is small for our local water storage control problem, our numerical results solve the exact dynamic program; however, in principle our BiDS algorithm could be used for approximate dynamic programming by using inexact representations of the value function.

\subsection{Outline}
 
We first describe in Sect. \ref{sec:wat_stor_mode} the stochastic optimal control problem for managing local water storage.  We also describe a stochastic model for daily water demand.  Section \ref{sec:rcmba} generalizes this into a broader class of reset control problems (i.e., stochastic optimal control with controlled resets to a single state and with a constraint on the maximum timespan in between system resets).  Next we develop essential elements of the BiDS algorithm for solving reset control problems, and prove the correctness of the value function and control policy computed by BiDS.  We conclude with Sect. \ref{sect:nswsm}, which uses BiDS to numerically solve the local water storage control problem for a specific set of numeric problem parameters.

\section{Description of Water Storage Model}

\label{sec:wat_stor_mode}

In this section, we first describe the stochastic optimal control problem associated with managing a local water storage container.  A key element of this model is the stochastic distribution used to model daily water demand.  We discuss one common stochastic model for water demand in a household and provide an algorithm to generate random samples from this distribution, which is useful for numeric computations when solving the stochastic optimal control.

\subsection{Stochastic Optimal Control Formulation}

The subscript $n$ denotes the corresponding variable for the $n$-th day.  We define the following variables:
\begin{itemize}
\item $x_n$ - amount of water stored in the tank;
\item $t_n$ - number of days since the tank was last emptied;
\item $u_n$ - control of adding new water to the tank;
\item $d_n$ - amount of water consumed;
\item $f_n$ - control of emptying the entire tank;
\end{itemize}
We also have the following dynamics to describe the water storage tank:
\begin{align}
&\begin{aligned}
x_{n+1} &= \begin{cases} (x_n + u_n - d_n)^+, &\text{if } f_n = 0\\
(u_n - d_n)^+, &\text{if } f_n = 1\end{cases}\\
&=\big((1-f_n)\cdot x_n + u_n - d_n\big)^+
\end{aligned}
\label{eq:x_dynam}\\
&\begin{aligned}
t_{n+1} &= \begin{cases} t_n +1,& \text{if } f_n = 0\\
1,&\text{if } f_n = 1
\end{cases} \\
&= \big(t_n\cdot(1-f_n) + 1\big)
\end{aligned}
\end{align}
where $(x)^+ = \max\{x,0\}$ and $(x)^-=\min\{x,0\}$.  Let $q: \mathbb{R}_+\rightarrow\mathbb{R}_+$ be a strictly increasing nonnegative function, and suppose $c,p,c_f\in\mathbb{R}_+$ are fixed positive constants.  The goal is to solve the following stochastic control problem
\begin{equation}
\begin{aligned}
\min\ & \rlap{$\mathbb{E}\Big[\textstyle\sum_{n=0}^\infty \gamma^n \Big(c u_n - p\cdot(\xi_n+u_n-d_n)^- +$}&\hspace{1.5cm} \\
&\rlap{$\hspace{1.5cm}q(t_n)\cdot(\xi_n+u_n-d_n)^+ + c_f f_n x_n\Big)\Big]$}&\\
\text{s.t. }& x_{n+1} = (\xi_n + u_n - d_n)^+, & \text{for } n \geq 0&\\
&t_{n+1}=(\tau_n + 1), & \text{for } n \geq 0&\\
&\xi_n = (1-f_n)\cdot x_n, & \text{for } n \geq 0&\\
&\tau_n = (1-f_n)\cdot t_n, & \text{for } n \geq 0&\\
&t_n\leq k, & \text{for } n \geq 0&\\
& f_n\in\{0,1\}, u_n \in\mathbb{R}_+, & \text{for } n \geq 0&\hspace{1cm}
\end{aligned}
\label{eq:infinite_horz}
\end{equation}
The interpretation is that $p\cdot(\xi_n+u_n-d_n)^-$ is a penalty for not having stored enough water to meet demand, $q(t_n)\cdot(\xi_n+u_n-d_n)^+$ measures the health cost due to storing excess water, $c$ is the cost of the water, $c_f$ is the waste penalty for flushing too much water, $\gamma \in [0,1)$ is a discount factor, and the constraint $\textstyle\sum_{j=1}^k f_{n+j} \geq 1$ for $n\geq 0$ ensures that the storage tank is fully emptied at least once every $k$ days to maintain safe water quality. Since the function $q$ is strictly increasing, the $q(t_n)$ term indicates that the quality of water deteriorates as time passes between emptying the tank.

\subsection{Water Demand Model}

A commonly used stochastic distribution for water demand $d_n$ is the Poisson Rectangular Pulse (PRP) model \cite{rodriguez1987rectangular}, and the simplest version of this model is that the number of usage events has a Poisson distribution with rate $\lambda$ while the duration of water usage is the sum of exponential distributions with rate $\mu$.  The probability density function (PDF) of $d_n$ is given by
\begin{equation}
f(d) = \sum_{k=1}^\infty\frac{(\lambda\cdot\mu)^kd^{k-1}e^{(-\lambda-\mu d)}}{k!(k-1)!}\mathbf{1}[d>0] + e^{-\lambda}\delta(d)
\end{equation}
where $\lambda > 0$ is the usage event rate, $\mu > 0$ is the usage duration rate, $\mathbf{1}[\cdot]$ is the indicator function, and $\delta(\cdot)$ is the Dirac delta function. An alternative expression \cite{rodriguez1987rectangular} for the PDF of this distribution is 
\begin{equation}
f(d) = e^{(-\lambda -\mu d)}I_1(2\sqrt{\lambda\mu d})\sqrt{\frac{\lambda\mu}{d}}\mathbf{1}[d>0] + e^{-\lambda}\delta(d)
\end{equation}
where $I_1$ is the modified Bessel function of the first type.  Though this is an unusual distribution, generating random numbers is relatively straightforward: 

\begin{proposition}
Algorithm \ref{alg:rng} generates random numbers whose distribution is that of the PRP model.
\end{proposition}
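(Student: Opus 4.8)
The plan is to compute the law of the random variable returned by Algorithm~\ref{alg:rng} and verify that its density is exactly the $f(d)$ stated above. Since the PRP model is defined generatively---a $\mathrm{Poisson}(\lambda)$ number of usage events, each contributing a duration that is $\mathrm{Exp}(\mu)$---I would organize the argument by conditioning on the number of events $N$. First I would establish that the count drawn by the algorithm satisfies $N\sim\mathrm{Poisson}(\lambda)$, so that $\mathbb{P}(N=k)=\lambda^k e^{-\lambda}/k!$; in particular the event $\{N=0\}$ occurs with probability $e^{-\lambda}$ and returns the output $d=0$, which is what produces the atom $e^{-\lambda}\delta(d)$ in the density.

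Next, on the event $\{N=k\}$ for $k\ge 1$, the output is the sum of $k$ independent $\mathrm{Exp}(\mu)$ durations. The central computation is to recall (or verify by a short induction on $k$, e.g.\ via convolution or the moment generating function $\big(\mu/(\mu-s)\big)^k$) that such a sum has the $\mathrm{Gamma}(k,\mu)$ density $\mu^k d^{k-1} e^{-\mu d}/(k-1)!$ on $d>0$. I would then apply the law of total probability, mixing these conditional densities against the Poisson weights:
\begin{equation*}
f(d)=\sum_{k=1}^\infty \frac{\lambda^k e^{-\lambda}}{k!}\cdot\frac{\mu^k d^{k-1}e^{-\mu d}}{(k-1)!}\,\mathbf{1}[d>0]+e^{-\lambda}\delta(d),
\end{equation*}
and collect exponentials and powers to obtain precisely the series form $\sum_{k\ge1}(\lambda\mu)^k d^{k-1}e^{-\lambda-\mu d}/\big(k!(k-1)!\big)$. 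Since the equivalence of this series with the Bessel-function expression $I_1(2\sqrt{\lambda\mu d})$ is already asserted in the text, matching the series form suffices.

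The routine obstacles are minor: interchanging summation with the conditional expectation is justified because all terms are nonnegative (Tonelli), and the Gamma-density identity is classical. The one step warranting care is the bookkeeping of the atom at $d=0$: I must check that the algorithm returns exactly $0$ (rather than a degenerate continuous draw) when $N=0$, so that the discrete mass $e^{-\lambda}$ lands on the Dirac component rather than being absorbed into the $d>0$ series, whose index deliberately starts at $k=1$. Verifying this boundary case, together with confirming that the algorithm's exponential and Poisson subroutines are themselves correctly specified (e.g.\ inverse-CDF sampling for the exponentials), is the main thing I would check explicitly; everything else reduces to the standard Poisson--Gamma mixture calculation above.
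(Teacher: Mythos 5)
Your proposal is correct, but it takes a genuinely different (and more laborious) route than the paper. The paper's entire proof is one sentence: the claim holds ``by construction of the algorithm and by the definition of the PRP model.'' The point is that the PRP model is \emph{defined generatively} --- a $\mathrm{Poisson}(\lambda)$ number of usage events, each contributing an independent $\mathrm{Exp}(\mu)$ duration --- and Algorithm~\ref{alg:rng} is a line-by-line transcription of that generative description, so its output has the PRP distribution tautologically, with no density computation required. You instead take the analytic route: condition on the event count, invoke the Gamma$(k,\mu)$ density for the sum of $k$ exponentials, mix against the Poisson weights via the law of total probability, and match the resulting series (plus the atom $e^{-\lambda}\delta(d)$ at $d=0$) against the closed-form $f(d)$ displayed in the paper. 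What your approach buys is a genuine verification that the \emph{stated PDF formula} is consistent with the generative definition --- something the paper simply inherits from the cited reference and never checks --- along with careful handling of the $N=0$ atom and the Tonelli justification for the interchange. What it costs is that the argument is no longer ``free'': it depends on the Gamma-density identity and on the bookkeeping you flag. Both proofs are valid; yours proves slightly more than the proposition strictly requires, since establishing that the algorithm samples from the PRP model does not require knowing the PRP density in closed form at all.
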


\begin{proof}
This is true by construction of the algorithm and by the definition of the PRP model.
\end{proof}

\begin{algorithm}[t]
\begin{algorithmic}[1] 
\caption{Random Number Generator for PRP Model}\label{alg:rng}
\State initialize $d\leftarrow 0$
\State set $e \leftarrow $ from Poisson distribution with rate $\lambda$
\For {$\nu = 1,\ldots,e$}
\State set $\sigma\leftarrow$ from exponential distribution with rate $\mu$
\State set $d\leftarrow d + \sigma$
\EndFor
\end{algorithmic}
\end{algorithm}

\section{Reset Control Model and BiDS Algorithm}

\label{sec:rcmba}
An important characteristic of the local water storage control problem is that choosing $f_n =1$ in effect causes the state of the system to be reset to $(x_n,t_n) = (0,0)$.  In this section, we generalize the local water storage control problem into a broader class of stochastic optimal control problems with controlled resets and with constraints on the maximum time between rests.  Then we develop a new algorithm that uses binary search to compute the value function and hence solve this particular stochastic optimal control problem.

\subsection{Stochastic Optimal Control with Controlled Resets}

Let the subscript $n\in\mathbb{Z}$ denote time, and consider the discrete-time dynamical system 
\begin{equation}
\label{eqn:dyngen}
\begin{aligned}
x_{n+1} &= h(\xi_n,\tau_n,u_n,w_n)\\
t_{n+1} &= \tau_n +1\\
\xi_n & = x_n\cdot(1-r_n) + \zeta\cdot r_n\\
\tau_n & = t_n\cdot(1-r_n)
\end{aligned}
\end{equation}
where $x_n\times t_n \in \mathbb{R}^p\times\mathbb{Z}_+$ are states, $\xi_n\times \tau_n \in \mathbb{R}^p\times\mathbb{Z}_+$ are psuedo-states, $u_n\times r_n \in \mathbb{R}^q\times\{0,1\}$ are controls, $w_n \in \mathbb{R}^m$ are i.i.d. random variables, and $h : \mathbb{R}^p\times\mathbb{Z}_+\times\mathbb{R}^q\times\mathbb{R}^m\rightarrow\mathbb{R}$ is a deterministic function.  The interpretation is that the control $r_n = 1$ resets the system to a \emph{known} initial state $\zeta\in\mathbb{R}^p$, the state $t_n$ keeps track of how many time steps have passed since the last system reset, and $h$ describes the dynamics when there is no reset.

Let $\gamma\in[0,1)$ be a discount factor, and suppose that our goal is to solve the stochastic control problem
\begin{equation}
\label{eqn:genscp}
\begin{aligned}
\min\ & \rlap{$\mathbb{E}\Big[\textstyle \sum_{n=0}^\infty \gamma^n \Big(g(\xi_n,\tau_n,u_n,w_n) + s(x_n,t_n,w_n)\cdot r_n\Big)\Big]$}&\\
\text{s.t. }& (\ref{eqn:dyngen}),\, t_n \leq k,\, u_n\in\mathcal{U}, &\text{for } n \geq0\hspace{6cm}&\\
\end{aligned}
\end{equation}
where $g : \mathbb{R}^p\times\mathbb{Z}_+\times\mathbb{R}^q\times\mathbb{R}^m\rightarrow\mathbb{R}_+$ is a nonnegative and continuous stage cost, $s : \mathbb{R}^p\times\mathbb{Z}_+\times\mathbb{R}^m\rightarrow\mathbb{R}_+$ is a nonnegative and continuous reset cost with $s(\zeta,t,w)\equiv0$, the constraint $u_n \in \mathcal{U}$ restricts the possible control actions, and the constraint $t_n \leq k$ requires the system be reset at least once every $k$ time steps. This stochastic optimal control problem has special structure that can be used to develop a new approach for its solution.

We first characterize the optimal value function for (\ref{eqn:genscp}).  Let $J(x,t)$ be the optimal cost to go, and define $J_0 = J(\zeta,0)$.  Recall that $J(x,t)$ is defined as the minimum value of (\ref{eqn:genscp}) for the initial conditions $x_0 = x$ and $t_0 = t$.

\begin{proposition}
\label{prop:dpgenscp}
The dynamic programming equations for (\ref{eqn:genscp}) are given by
\begin{equation}
\label{eqn:ansbell}
\begin{aligned}
J(\zeta,0) &= \min_{u\in\mathcal{U}} \mathbb{E}\Big[ g(\zeta,0,u,w) + \gamma J(h(\zeta,0,u,w),1)\rlap{$\Big]$}\\
J(x,t) &= \min\Big\{J_0 + \mathbb{E}\big(s(x,t,w)\big),\\
&\hspace{0.5cm}\min_{u\in\mathcal{U}} \mathbb{E}\Big[ g(x,t,u,w) +\gamma J(h(x,t,u,w),t+1)\Big]\Big\}\\
J(x,k) &= J_0 + \mathbb{E}\big(s(x,k,w)\big)
\end{aligned}
\end{equation}
where the middle $J(x,t)$ holds for all $x$ and $t=0,\ldots,k-1$.
\end{proposition}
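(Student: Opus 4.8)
The plan is to obtain the three equations as specializations of the ordinary Bellman equation for the discounted infinite-horizon problem (\ref{eqn:genscp}), exploiting the precise way the reset control $r_n$ enters the dynamics (\ref{eqn:dyngen}). First I would invoke the principle of optimality for discounted stochastic control \cite{bertsekas1995}: because the stage cost $g$ and reset cost $s$ are nonnegative and continuous, the discount factor satisfies $\gamma\in[0,1)$, and the constraint $t_n\le k$ is always feasible going forward (a reset can be applied in any state), the value function $J(x,t)$ is well defined and satisfies
\begin{equation*}
J(x,t) = \min_{r\in\{0,1\},\,u\in\mathcal{U}} \mathbb{E}\big[g(\xi,\tau,u,w) + s(x,t,w)\,r + \gamma J(h(\xi,\tau,u,w),\tau+1)\big],
\end{equation*}
where $\xi = x\,(1-r) + \zeta\,r$ and $\tau = t\,(1-r)$, and where the minimization over $r$ ranges only over choices that keep $t_{n+1}=\tau+1\le k$. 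Existence of the minimizer I would handle by assuming the standard regularity (e.g. $\mathcal{U}$ compact, or the infimum attained), so that writing $\min$ is justified; otherwise the identities hold verbatim with $\inf$ in place of $\min$.

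Next I would split this equation according to the two values of $r$ and read off the feasibility imposed by $t_n\le k$. For $r=0$ one has $(\xi,\tau)=(x,t)$, contributing $\min_{u}\mathbb{E}[g(x,t,u,w)+\gamma J(h(x,t,u,w),t+1)]$, and this branch is admissible exactly when $t\le k-1$, since $r=0$ forces $t_{n+1}=t+1$. For $r=1$ one has $(\xi,\tau)=(\zeta,0)$, so this branch contributes $\mathbb{E}[s(x,t,w)] + \min_{u}\mathbb{E}[g(\zeta,0,u,w)+\gamma J(h(\zeta,0,u,w),1)]$, where the reset cost has been pulled out of the minimization because $s(x,t,w)$ does not depend on $u$.

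The key step, which I expect to carry the real content, is recognizing that the inner minimization in the $r=1$ branch equals $J_0$. This is the \emph{reset-decoupling} observation: after a reset the pseudo-state is always $(\zeta,0)$ irrespective of the originating state $(x,t)$, and both the stage cost $g$ and the successor $h$ depend on the current state only through the pseudo-state; hence the continuation from any post-reset instant is governed by precisely the optimal value attainable from pseudo-state $(\zeta,0)$. To identify that value with $J_0=J(\zeta,0)$, I would show that resetting is \emph{redundant} at the state $(\zeta,0)$: a reset there produces the pseudo-state $(\zeta,0)$, identical to the no-reset pseudo-state, and incurs zero reset cost since $s(\zeta,\cdot,\cdot)\equiv 0$; therefore both branches coincide, the no-reset choice attains the minimum, and $J(\zeta,0)=\min_{u}\mathbb{E}[g(\zeta,0,u,w)+\gamma J(h(\zeta,0,u,w),1)]$. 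This is the first displayed equation of (\ref{eqn:ansbell}), and it simultaneously certifies that the $r=1$ branch above equals $J_0+\mathbb{E}[s(x,t,w)]$.

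Finally I would assemble the cases. For $t=0,\dots,k-1$ both branches are admissible, and taking their minimum yields the middle equation. For $t=k$ the $r=0$ branch is infeasible, so only the reset branch survives and gives $J(x,k)=J_0+\mathbb{E}[s(x,k,w)]$. The main obstacle is conceptual rather than computational: making the principle of optimality rigorous in the presence of the state constraint (verifying the constraint remains feasible at every reachable state so that the recursion is well posed) and carefully justifying that the reset branch collapses to exactly $J_0$ rather than to some quantity still depending on $(x,t)$. Once the reset-decoupling argument and the redundancy of resets at $(\zeta,0)$ are established, the three equations follow immediately from the case split.
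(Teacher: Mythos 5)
Your proof is correct and follows essentially the same route as the paper's: both hinge on the observations that the reset branch of the Bellman equation contributes $J_0+\mathbb{E}[s(x,t,w)]$ because the post-reset pseudo-state is always $(\zeta,0)$, that the reset is forced at $t=k$ by the constraint $t_n\leq k$, and that resetting is redundant at $(\zeta,0)$ since $s(\zeta,\cdot,\cdot)\equiv 0$. The only cosmetic difference is that you perform a single case split on $r$ in the generic Bellman equation, whereas the paper packages the identical argument as a downward induction on $t$ whose inductive step just repeats the base case.
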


\begin{proof}
We prove this by induction on $t$.  Recalling that $s(\zeta,0,w) = 0$, we rewrite \eqref{eqn:genscp} for $(x_0,t_0) = (\zeta,0)$ as
\begin{equation}
\label{eqn:z0}
\begin{aligned}
\min\ &\rlap{$\mathbb{E}\Big[\textstyle g(\zeta,0,u_0,w_0) + \sum_{n=1}^\infty\gamma^n \Big(g(\xi_n,\tau_n,u_n,w_n) +$}&\\
&\rlap{$\qquad\qquad\qquad\qquad\qquad\qquad\qquad s(x_n,t_n,w_n)\cdot r_n\Big)\Big]$}&\\
\text{s.t. }& (\ref{eqn:dyngen}),\, t_n \leq k,\, u_n\in\mathcal{U}, &\text{for } n \geq0\hspace{6cm}&\\
\end{aligned}
\end{equation}
But since $(x_0,t_0) = (\zeta,0)$, we have $\xi_0 = \zeta$ and $t_1 = 1$ for both $r_0 = 0$ and $r_0 = 1$.  Thus the above simplifies to the expression for $J(\zeta,0)$ given in (\ref{eqn:ansbell}).

Next consider the case with $(x_0,t_0) = (x,k)$.  We must have $r_0 =1$ because otherwise $t_1 = k+1$ which violates the constraint $t_n \leq k$.  So $\xi_0 = \zeta$, and we can rewrite \eqref{eqn:genscp} as
\begin{equation}
\begin{aligned}
\min\ & \rlap{$\mathbb{E}\Big[\textstyle s(x,k,w_0) + g(\zeta,0,u_0,w_0) +$}\\
& \rlap{$\quad\textstyle \sum_{n=1}^\infty \gamma^n \Big(g(\xi_n,\tau_n,u_n,w_n)  +  s(x_n,t_n,w_n)\cdot r_n\Big)\Big]$}\\
\text{s.t. }& (\ref{eqn:dyngen}),\, t_n \leq k,\, u_n\in\mathcal{U}, &\text{for } n \geq0\hspace{6cm}&\\
\end{aligned} 
\end{equation}
Clearly we have $J(x,k) = J(\zeta,0) + \mathbb{E}[s(x,k,w)]$, since the reset cost $s(x,k,w)$ does not depend on the control $u_0$.

 Now consider the case with $(x_0,t_0) = (x,k-1)$, and let $J_0(x,k-1)$ and $J_1(x,k-1)$ denote the optimal cost to go under $r=0$ and $r=1$, respectively.  By Bellman's principle of optimality we have $J(x,k-1) = \min\{J_0(x,k-1),J_1(x,k-1)\}$. For $J_1(x,k-1)$, we have $\xi_0 = \zeta$ by definition, and so a similar argument to the one for $J(x,k)$ gives that $J_1(x,k-1) = J_0 + \mathbb{E}[s(x,k-1,w)]$. Next, observe that we can express $J_0(x,k-1)$ as
\begin{equation}
\begin{aligned}
\min\ & \rlap{$\mathbb{E}\Big[\textstyle g(x,k-1,u_0,w_0)  +$}\\& \rlap{$\textstyle\sum_{n=1}^\infty \gamma^n \Big(g(\xi_n,\tau_n,u_n,w_n)  +  s(x_n,t_n,w_n)\cdot r_n\Big)\Big]$}\\
\text{s.t. }& (\ref{eqn:dyngen}),\, t_n \leq k,\, u_n\in\mathcal{U}, &\text{for } n \geq0\hspace{6cm}&\\
\end{aligned} 
\end{equation}
Simplifying gives $J_0(x,k-1) = \min \mathbb{E}[\textstyle g(x,0,u_0,w_0) + \gamma J(h(x,k-1,u_0,w_0),k)]$. Hence $J(x,k-1)$ satisfies the middle equation for $J(x,t)$ in (\ref{eqn:ansbell}). Now suppose that the middle equation for $J(x,t)$ in (\ref{eqn:ansbell}) is satisfied up to $0 < t+1 \leq k-1$: We will show this implies that the middle equation for $J(x,t)$ in (\ref{eqn:ansbell}) is satisfied for $t$.  Indeed, using the same argument as for the base case gives this desired step.  The final result follows by applying induction.
\end{proof}

\subsection{Design of the BiDS Algorithm}

To solve these dynamic programming equations and compute the optimal policy, we develop our Binary Dynamic Search (BiDS) algorithm that is presented in Algorithm \ref{alg:bids_alg}. The key feature of BiDS is that unlike value iteration \cite{bertsekas1995}, which requires finding a fixed point over the space of functions by repeated application of a contraction mapping, BiDS performs a binary search in order to find a fixed (vector-valued) point of an appropriately constructed mapping.

\begin{algorithm}[t]
\begin{algorithmic}[1] 
\caption{Binary Dynamic Search (BiDS) Algorithm}\label{alg:bids_alg}
\State initialize $ \underline{v} \leftarrow 0$ and $\overline{v} \leftarrow (\frac{1}{1-\gamma})\min_u \mathbb{E}[g(\zeta,0,u,w_0) + 
\gamma\cdot s(h(\zeta,0,u,w_0),1,w_1)]$
\Repeat
\State set $v \leftarrow (\overline{v}+\underline{v})/2$
\State set $V(x,k,v) = v + \mathbb{E}[s(x,w,k)]$
\For {$t = (k-1),(k-2),\ldots,0$}
\State set $V(x,t,v) = \min\big\{v + \mathbb{E}[s(x,w,t)],\qquad\qquad\qquad\linebreak\hspace*{4.4cm}\min_{u\in\mathcal{U}} \mathbb{E}[g(x,t,u,w) +\linebreak\hspace*{3cm}\gamma V(h(x,t,u,w),t+1,v)]\big\}$
\EndFor
\State set $\Upsilon(v) = \min_{u\in\mathcal{U}} \mathbb{E}[g(\zeta,0,u,w) +\qquad\qquad\qquad\qquad\linebreak\hspace*{4.5cm}\gamma V(h(\zeta,0,u,w),1,v)]$
\If{$v < \Upsilon(v)$}
\State set $\overline{v}\leftarrow v$
\Else
\State set $\underline{v}\leftarrow v$
\EndIf
\Until {$(\overline{v}-\underline{v}) \leq \epsilon$}
\State set $v^*=(\overline{v}+\underline{v})/2$
\end{algorithmic}
\end{algorithm}
Our main result about the BiDS algorithm concerns its finite termination and convergence properties. To show this, we first study the recursion calculations within BiDS.
\begin{proposition}
\label{prop:approxval}
Consider the recurrence relation given by
\begin{equation} \label{eq:bids_recur}
\begin{aligned}
\Upsilon(v) &= \min_{u\in\mathcal{U}} \mathbb{E}\Big[g(\zeta,0,u,w) + \gamma V(h(\zeta,0,u,w),1,v)  \Big] \\
V(x,t,v) &= \min\Big\{ v + \mathbb{E}\big(s(x,t,w)\big), \\
&\hspace{-0.33cm}\min_{u\in\mathcal{U}} \mathbb{E} \Big[g(x,t,u,w) + \gamma V(h(x,t,u,w),t+1,v) \Big]  \Big\} \\
V(x,k,v) &= v + \mathbb{E}\big(s(x,k,w)\big)
\end{aligned}
\end{equation}
where the middle equation for $V(x,t,v)$ holds for all $x$ and $t=0,\ldots,k-1$.  Then $\Upsilon(v)$ is monotone increasing in $v$, we have that $V(x,t,J_0) = J(x,t)$, and $v= J_0$ is the unique (vector-valued) fixed point of $\Upsilon(v)$. 
\end{proposition}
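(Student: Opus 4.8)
The plan is to establish the three assertions — monotonicity of $\Upsilon$, the identity $V(x,t,J_0)=J(x,t)$, and uniqueness of the fixed point — by exploiting that the recurrence \eqref{eq:bids_recur} is evaluated by a finite backward sweep in $t$ from $t=k$ down to $t=0$. The natural tool throughout is backward induction on $t$, and the key observation is that \eqref{eq:bids_recur} is structurally identical to the dynamic programming equations \eqref{eqn:ansbell} of Proposition \ref{prop:dpgenscp}, with $v$ playing the role that $J_0$ plays in \eqref{eqn:ansbell}.

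For monotonicity, I would first show by backward induction that $V(x,t,v)$ is monotone increasing in $v$ for each fixed $(x,t)$. The base case $V(x,k,v)=v+\mathbb{E}(s(x,k,w))$ is affine and increasing in $v$. For the inductive step, if $V(\cdot,t+1,\cdot)$ is increasing in $v$, then each term $g(x,t,u,w)+\gamma V(h(x,t,u,w),t+1,v)$ is increasing in $v$ (using $\gamma\ge 0$), and monotonicity is preserved under taking expectations and under the pointwise minima over $u$ and over the two branches of the middle equation. Since $\Upsilon(v)$ is built from $V(\cdot,1,v)$ by the same operations, $\Upsilon$ is monotone increasing in $v$.

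Next I would identify $V(x,t,J_0)$ with $J(x,t)$. Substituting $v=J_0$ into \eqref{eq:bids_recur} makes the three equations coincide term-by-term with \eqref{eqn:ansbell}, so a backward induction on $t$ (base case $t=k$ immediate, inductive step using the middle equation) gives $V(x,t,J_0)=J(x,t)$ for all $t=0,\ldots,k$. In particular $V(\cdot,1,J_0)=J(\cdot,1)$, whence $\Upsilon(J_0)=\min_{u\in\mathcal{U}}\mathbb{E}[g(\zeta,0,u,w)+\gamma J(h(\zeta,0,u,w),1)]=J(\zeta,0)=J_0$, which shows $J_0$ is a fixed point of $\Upsilon$.

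The main obstacle is uniqueness, since monotonicity alone does not pin down a single fixed point. The plan is to strengthen the monotonicity step into a Lipschitz estimate: I would prove by backward induction that $V(x,t,\cdot)$ is nonexpansive, i.e. $0\le V(x,t,v_2)-V(x,t,v_1)\le v_2-v_1$ whenever $v_1\le v_2$. The base case holds with equality. For the inductive step, the branch $v+\mathbb{E}(s(x,t,w))$ grows by exactly $v_2-v_1$, while the optimization branch grows by at most $\gamma(v_2-v_1)\le v_2-v_1$ by the inductive hypothesis and $\gamma<1$; the delicate point is that a pointwise minimum of two functions, one growing by $a$ and the other by $b$, grows by at most $\max\{a,b\}$ (seen by evaluating at $v_2$ the branch that attains the minimum at $v_1$). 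This yields the $1$-Lipschitz bound for all $t$. Applying it at $t=1$ inside $\Upsilon$, and using the factor $\gamma$ together with the same minimum-over-$u$ argument, gives $0\le\Upsilon(v_2)-\Upsilon(v_1)\le\gamma(v_2-v_1)$, so $\Upsilon$ is a contraction with modulus $\gamma<1$. If $v_1$ and $v_2$ were both fixed points, then $|v_1-v_2|=|\Upsilon(v_1)-\Upsilon(v_2)|\le\gamma|v_1-v_2|$ forces $v_1=v_2$; combined with $\Upsilon(J_0)=J_0$, this proves that $v=J_0$ is the unique fixed point.
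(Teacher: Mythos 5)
Your proposal is correct. The first two claims (monotonicity of $\Upsilon$ and the identification $V(x,t,J_0)=J(x,t)$) are proved exactly as in the paper: backward induction from $t=k$, using that monotonicity is preserved under expectation and minima, and term-by-term comparison with the Bellman equations \eqref{eqn:ansbell}. Where you genuinely diverge is the uniqueness of the fixed point. The paper argues structurally: if $\nu=\Upsilon(\nu)$, then substituting $\Upsilon(\nu)$ for $\nu$ in the recursion shows that $V(\cdot,\cdot,\nu)$ satisfies the dynamic programming equations of Proposition \ref{prop:dpgenscp}, whose solution is unique by Proposition 4.1.5 of \cite{bertsekas1995}; hence $\nu=V(\zeta,0,\nu)=J_0$. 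You instead prove a quantitative estimate --- $V(x,t,\cdot)$ is nonexpansive in $v$, hence $0\le\Upsilon(v_2)-\Upsilon(v_1)\le\gamma(v_2-v_1)$ --- so that $\Upsilon$ is a $\gamma$-contraction and can have at most one fixed point. Both arguments are sound. The paper's route is shorter because it leans on a cited external uniqueness result for the Bellman equations; yours is more self-contained and, as a bonus, the ``min of two functions each perturbed by a bounded amount'' step you need is precisely the paper's Lemma \ref{lemma:minlem}, and your Lipschitz bound is essentially the error-propagation estimate the paper later re-derives inside the proof of Theorem \ref{thm:bids}. One could therefore view your contraction estimate as unifying the uniqueness claim here with the $\epsilon$-accuracy claim there.
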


\begin{proof}
To prove the first result, note that $V(x,k,v)$ is monotone increasing by construction.  Next, suppose $V(x,t+1,v)$ is monotone increasing in $v$.  The first term in the $\min$ defining $V(x,t,v)$ is monotone increasing by construction, and the second term in the $\min$ defining $V(x,t,v)$ is monotone increasing by assumption.  Thus $V(x,t,v)$ is monotone increasing in $v$, and this result extends to all $t = 0,\ldots,k-1$ by induction.  Lastly, observe that $\Upsilon(v)$ is monotone increasing in $v$ since its first term is independent of $v$ and since the second term $V(h(\zeta,0,u,w),1,v)$ was shown to be monotone increasing in $v$.

To prove the second result, we assume $v= J_0$ and use induction on $t$.  First note $V(x,k,J_0) = J_0 + \mathbb{E}(s(x,k,w)) = J(x,k)$ by Proposition \ref{prop:dpgenscp}.  Next suppose that $V(x,t+1,J_0) = J(x,t+1)$, and note that by definition 
\begin{align}
&V(x,t,J_0) = \min\Big\{ J_0 + \mathbb{E}\big(s(x,t,w)\big), \\
&\qquad\min_{u\in\mathcal{U}} \mathbb{E} \Big[g(x,t,u,w) + \gamma V(h(x,t,u,w),t+1,J_0) \Big]  \Big\}.\nonumber
\end{align}
Comparing to Proposition \ref{prop:dpgenscp} shows $V(x,t,J_0)=J(x,t)$, and so the result for $V(x,t,J_0)$ for all $x$ and $t=0,\ldots,k-1$ follows by induction.  Lastly, inserting the relation $V(x,1,J_0)=J(x,1)$ into the definition for $\Upsilon(J_0)$ gives $\Upsilon(J_0) = J_0$ by comparison with Proposition \ref{prop:dpgenscp}.

To show the third result, let $\nu$ be any value such that $\nu=\Upsilon(\nu)$. (Note that we have shown above that such a $\nu$ must exist.)  Then we can rewrite \eqref{eq:bids_recur} as
\begin{equation}
\begin{aligned}
\nu =\Upsilon(\nu)&= \min_{u\in\mathcal{U}} \mathbb{E}\Big[g(\zeta,0,u,w) + \gamma V(h(\zeta,0,u,w),1,\nu)  \Big] \\
V(x,t,\nu) &= \min\Big\{ \Upsilon(\nu) + \mathbb{E}\big(s(x,t,w)\big), \\
&\hspace{-0.33cm}\min_{u\in\mathcal{U}}\mathbb{E} \Big[g(x,t,u,w) + \gamma V(h(x,t,u,w),t+1,\nu) \Big]  \Big\} \\
V(x,k,\nu) &= \Upsilon(\nu) + \mathbb{E}\big(s(x,k,w)\big)
\end{aligned}
\end{equation}
Comparing these to Proposition \ref{prop:dpgenscp} shows that $V(x,t,\nu)$ is a fixed point of the dynamic programming equations for (\ref{eqn:genscp}).  Thus $V(x,t,\nu) = J(x,t)$, and $J(x,t)$ is unique by Proposition 4.1.5 of \cite{bertsekas1995}.  As a result, we have  $\nu = \Upsilon(\nu)=V(\zeta,0,\nu) = J_0$.  This means that $\nu$ is uniquely defined.
\end{proof}

Before we prove our main result about the BiDS algorithm, we need an additional lemma.

\begin{lemma}
\label{lemma:minlem}
Suppose $|a'-a| \leq \epsilon$ and $|b'-b| \leq \epsilon$.  Then we have that $|\min\{a',b'\}-\min\{a,b\}|\leq \epsilon$.
\end{lemma}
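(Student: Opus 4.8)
The plan is to prove that the $\min$ operation is nonexpansive (1-Lipschitz) with respect to the sup-norm, by reducing to a case analysis on which argument achieves each minimum. I would first introduce the abbreviations $m = \min\{a,b\}$ and $m' = \min\{a',b'\}$, and aim to bound $|m'-m|$ above by $\epsilon$ using the two given hypotheses $|a'-a|\le\epsilon$ and $|b'-b|\le\epsilon$.

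The cleanest approach I would take is to establish the two-sided bound $m - \epsilon \le m' \le m + \epsilon$ rather than juggling absolute values directly. For the upper bound $m' \le m + \epsilon$: whichever of $a$ or $b$ attains $m$, say $a$ (the argument is symmetric if $b$ attains it), the hypothesis gives $a' \le a + \epsilon = m + \epsilon$, and since $m' = \min\{a',b'\} \le a'$, we conclude $m' \le m+\epsilon$. For the lower bound $m' \ge m - \epsilon$: whichever of $a'$ or $b'$ attains $m'$, say $a'$, the hypothesis gives $a \le a' + \epsilon = m' + \epsilon$, and since $m = \min\{a,b\} \le a$, we get $m \le m' + \epsilon$, i.e. $m' \ge m - \epsilon$. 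Combining the two bounds yields $|m'-m|\le\epsilon$.

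An even more economical route, which I might present instead to avoid the case split, is to observe that for any real numbers one has $\min\{a',b'\} \le a' \le a + \epsilon$ and symmetrically $\min\{a',b'\} \le b' \le b + \epsilon$, so that $m' \le \min\{a,b\} + \epsilon = m + \epsilon$ by taking the minimum of the two right-hand sides. The reverse inequality $m \le m' + \epsilon$ follows by the identical argument with the roles of the primed and unprimed quantities swapped, since the hypotheses are symmetric in this exchange. This packages the entire proof into two applications of a single elementary inequality plus the symmetry observation.

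There is essentially no serious obstacle here, as this is a standard nonexpansiveness fact; the only thing to be careful about is making the symmetry arguments explicit rather than hand-waving them, and ensuring the direction of each inequality is tracked correctly when passing between $m' \le m + \epsilon$ and $m - \epsilon \le m'$. I would therefore keep the writeup short and emphasize clarity over generality.
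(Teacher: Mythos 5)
Your proof is correct, and your first argument (case analysis on which of the two arguments attains each minimum, organized as two one-sided bounds) is essentially the same as the paper's proof, which enumerates the four combinations of minimizers explicitly. Your second, case-free variant --- using $\min\{a',b'\}\le a'\le a+\epsilon$ and $\min\{a',b'\}\le b'\le b+\epsilon$ to get $\min\{a',b'\}\le\min\{a,b\}+\epsilon$, then swapping primed and unprimed --- is a slightly cleaner packaging of the same elementary fact and would be a fine substitute for the paper's four-case enumeration.
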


\begin{proof}
Let $c' = \min\{a',b'\}$ and $c = \min\{a,b\}$.  We have four cases:  (1) $c' = a'$ and $c = a$, then $|c'-c|\leq\epsilon$; (2) $c' = b'$ and $c = b$, then $|c'-c|\leq\epsilon$; (3) $c' = a'$ and $c = b$, then $c'-c=a'-b\geq a'-a\geq-\epsilon$ and $c'-c=a'-b\leq b'-b\leq\epsilon$; and (4) $c' = b'$ and $c = a$, then $c'-c=b'-a\geq b'-b\geq-\epsilon$ and $c'-c=b'-a\leq a'-a\leq\epsilon$.
\end{proof}

We can now show our main result about BiDS:

\begin{theorem}
\label{thm:bids}
The BiDS algorithm terminates in finite time when $\epsilon > 0$, and the solution $v^*$ returned by BiDS is such that $\max_{x}\big|V(x,t,v^*) - J(x,t)\big| \leq \epsilon$ for all $t = 0,\ldots,k$.
\end{theorem}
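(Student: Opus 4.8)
The plan is to recognize the repeat-loop of BiDS as a bisection search for the unique fixed point $J_0$ of $\Upsilon$, and then transfer the resulting scalar accuracy $|v^*-J_0|\le\epsilon$ to the full value function through a Lipschitz estimate. Proposition \ref{prop:approxval} already supplies the three facts I will lean on: $\Upsilon$ is monotone increasing, $J_0$ is its unique fixed point, and $V(x,t,J_0)=J(x,t)$.

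First I would establish that $V(x,t,\cdot)$ is nonexpansive in its last argument, i.e. $|V(x,t,v')-V(x,t,v)|\le|v'-v|$ for every $x$ and every $t=0,\dots,k$. This goes by backward induction on $t$. The base case $V(x,k,v)=v+\mathbb{E}[s(x,k,w)]$ is exactly $1$-Lipschitz. For the inductive step, the first branch $v+\mathbb{E}[s(x,t,w)]$ is $1$-Lipschitz, while the second branch $\min_{u}\mathbb{E}[g+\gamma V(h,t+1,v)]$ is $\gamma$-Lipschitz by the inductive hypothesis (the $\min_u$ and the expectation preserve the modulus); Lemma \ref{lemma:minlem} then shows the outer $\min$ of the two branches is Lipschitz with the larger of the two moduli, namely $1$. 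Feeding this into the definition of $\Upsilon$ and exploiting the factor $\gamma$ shows $|\Upsilon(v')-\Upsilon(v)|\le\gamma\,|v'-v|$, so $\Upsilon$ is a $\gamma$-contraction.

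Next I would convert contraction plus monotonicity into the sign test that drives the search. Writing $\phi(v)=\Upsilon(v)-v$, for $v_1<v_2$ we have $\Upsilon(v_2)-\Upsilon(v_1)\le\gamma(v_2-v_1)<v_2-v_1$, so $\phi$ is strictly decreasing; since $\phi(J_0)=0$ this gives the key equivalence $\Upsilon(v)>v\iff v<J_0$ (and $\Upsilon(v)<v\iff v>J_0$). I would then verify the initial bracket: $J_0\ge0$ because $g,s\ge0$ force the discounted cost to be nonnegative, and $J_0\le\overline v$ because the initialization value $\overline v$ is exactly the discounted cost of the admissible stationary policy that resets at every step (feasible since then $t_n\le1\le k$) using the control attaining the initialization minimum. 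Hence $J_0\in[\underline v,\overline v]$ at the start, and since the midpoint test reveals on which side of $J_0$ the point $v$ lies, the loop retains the half-interval containing $J_0$; thus $J_0\in[\underline v,\overline v]$ is maintained by induction on iterations while the width is halved each pass.

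Finally, termination and accuracy follow. Because the bracket width is halved each pass, it drops below any $\epsilon>0$ after $\lceil\log_2(\overline v/\epsilon)\rceil$ iterations, giving finite termination. At that point both $v^*$ and $J_0$ lie in an interval of width $\le\epsilon$, so $|v^*-J_0|\le\epsilon$; applying the nonexpansiveness from the first step together with $V(x,t,J_0)=J(x,t)$ yields $\max_x|V(x,t,v^*)-J(x,t)|=\max_x|V(x,t,v^*)-V(x,t,J_0)|\le|v^*-J_0|\le\epsilon$ for all $t$. I expect the main obstacle to be the nonexpansiveness claim of the first step: it is the part that genuinely needs Lemma \ref{lemma:minlem} to push the Lipschitz bound through the nested $\min$, and it does double duty, since it is simultaneously what makes $\Upsilon$ a contraction (so that the bisection is valid) and what transfers the scalar error $|v^*-J_0|$ to the vector-valued value function in the concluding estimate.
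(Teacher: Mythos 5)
Your proposal is correct and follows essentially the same architecture as the paper's proof: establish the bracket $J_0\in[0,\overline v]$, argue the bisection terminates with $|v^*-J_0|\le\epsilon$, and then push that scalar error through the recursion by backward induction using Lemma \ref{lemma:minlem}. Your concluding induction (nonexpansiveness of $V(x,t,\cdot)$ applied at the pair $(v^*,J_0)$ together with $V(x,t,J_0)=J(x,t)$) is literally the same computation as the paper's step-by-step bound on $|V(x,t,v^*)-J(x,t)|$, just packaged as a Lipschitz modulus; and your bound $J_0\le\overline v$ via the cost of the reset-every-step stationary policy is an equivalent, arguably more transparent, derivation of the paper's $\overline J$ bound obtained from the dynamic programming equations. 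The one place you go genuinely beyond the paper is the $\gamma$-contraction of $\Upsilon$ and the resulting sign test $\Upsilon(v)>v\iff v<J_0$: the paper justifies correctness of the bisection only by citing monotonicity of $V(\zeta,0,v)$ in $v$, which by itself does not guarantee that the retained half-interval contains $J_0$, so your argument closes a real gap. (It also exposes that the branch assignment in Algorithm \ref{alg:bids_alg} as printed --- setting $\overline v\leftarrow v$ when $v<\Upsilon(v)$, i.e.\ when $v<J_0$ --- is reversed from what your invariant, and the paper's accuracy claim $|v^*-J_0|\le\epsilon$, actually require; your proof describes the corrected logic.)
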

\begin{proof}
Since the state costs and reset costs are nonnegative, Proposition 4.1.5 from \cite{bertsekas1995} implies that the value function $J(x,t)$ is the unique fixed point of the dynamic programming equations (\ref{eqn:ansbell}) for (\ref{eqn:genscp}).  Our first intermediate claim is that $J_0$ is finite.  In particular, define
\begin{multline}
\overline{J} = \textstyle\Big(\frac{1}{1-\gamma}\Big)\min_{u\in\mathcal{U}} \mathbb{E}\Big[g(\zeta,0,u,w_0) +\\ \gamma\cdot s(h(\zeta,0,u,w_0),1,w_1)\Big],
\end{multline}
and observe that $\overline{J}$ is finite since the objective is nonnegative and the optimization problem is always feasible.  Next note $J_0\geq 0$ since the stage costs and reset costs are nonnegative.  Furthermore, using the dynamic programming equations (\ref{eqn:ansbell}) from Proposition \ref{prop:dpgenscp} gives that $J(x,1) \leq J_0 + \mathbb{E}(s(x,1,w))$.  Again using the dynamic programming equations (\ref{eqn:ansbell}) gives
\begin{multline}
J_0 \leq \min_u \mathbb{E}\Big[g(\zeta,0,u,w_0) +\\ \gamma(J_0 + s(h(\zeta,0,u,w_0),1,w_1)\Big].
\end{multline}
Solving for $J_0$ gives $J_0 \leq \overline{J}$.  Since we showed $J_0\in[0,\overline{J}]$, this implies $J_0$ is finite.  Next observe that the BiDS algorithm is a binary search over the range $[0,\overline{J}]$.  Recalling that $V(\zeta,0,v)$ is monotone increasing in $v$ by Proposition \ref{prop:approxval}, we must have that the binary search terminates in a finite number of steps when $\epsilon > 0$.  This proves the first part of the theorem.

We use induction to prove the second part of the theorem.  First note that by Propositions \ref{prop:dpgenscp} and \ref{prop:approxval} we have
\begin{equation}
\textstyle\max_x\big|V(x,k,v^*) - J(x,k)\big| = \big|v^* - J_0\big| \leq \epsilon
\end{equation}
where the inequality follows by the termination condition of BiDS.  Now suppose we have that $\max_x|V(x,t+1,v^*) - J(x,t+1)| \leq \epsilon$.  Then a similar calculation gives $\max_x\big|v^* + \mathbb{E}(s(x,t,w)) - J_0 - \mathbb{E}\big(s(x,t,w)\big)\big| \leq \epsilon$ and that $\max_{x,u}\big|g(x,t,u,w) + \gamma V(h(x,t,u,w),t+1,v^*) - g(x,t,u,w) - \gamma J(h(x,t,u,w),t+1)\big| \leq \epsilon$.  And so Lemma \ref{lemma:minlem} implies that $\max_x|V(x,t,v^*)- J(x,t)| \leq \epsilon$ since expectation preserves distances.  Thus $\max_x|V(x,t,v^*)- J(x,t)| \leq \epsilon$ for all $t=0,\ldots,k$ by induction. 
\end{proof}

We can use the BiDS algorithm to compute the optimal policy by taking the minimizing arguments of $V(x,t,v^*)$.

\begin{corollary}
Let $v^*$ be the value returned by BiDS for a given $\epsilon > 0$.  If $\textstyle(u,r)^*(x,t,\epsilon)\in\argmin V(x,t,v^*)$ for $(x,t)\neq(\zeta,0)$ and $\textstyle(u,r)^*(\zeta,0,\epsilon)\in\argmin \Upsilon(v^*)$, then $\textstyle\limsup_{\epsilon\downarrow 0} (u,r)^*(x,t,\epsilon) \subseteq \argmin J(x,t)$ for $t=0,\ldots,k$.
\end{corollary}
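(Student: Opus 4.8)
The plan is to read this corollary as a standard stability result for minimizers under uniform convergence of objectives: the approximating objectives are the per-stage Bellman objectives built from $V(\cdot,\cdot,v^*)$, and the limiting objective is the one built from the true value function $J$. First I would record the two facts that drive everything. By Theorem \ref{thm:bids} we have $\max_x|V(x,t,v^*)-J(x,t)|\le\epsilon$ for every $t=0,\ldots,k$, and specializing to $t=k$ (where $V(x,k,v^*)-J(x,k)=v^*-J_0$) gives $|v^*-J_0|\le\epsilon$, so both $V(\cdot,t,v^*)\to J(\cdot,t)$ uniformly and $v^*\to J_0$ as $\epsilon\downarrow0$. Fixing a state $(x,t)$ with $t<k$, the per-stage objective minimized by BiDS,
\[
\Phi_\epsilon(u,r)=r\big(v^*+\mathbb{E}[s(x,t,w)]\big)+(1-r)\,\mathbb{E}\big[g(x,t,u,w)+\gamma V(h(x,t,u,w),t+1,v^*)\big],
\]
then converges uniformly in $(u,r)$ to the objective $\Phi(u,r)$ obtained by replacing $v^*$ with $J_0$ and $V$ with $J$, because the shared $\mathbb{E}[s(x,t,w)]$ term cancels and expectation does not increase the sup-distance, so $\sup_{u,r}|\Phi_\epsilon-\Phi|\le\gamma\epsilon\le\epsilon$. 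By Proposition \ref{prop:dpgenscp} the minimizers of $\Phi$ are exactly $\argmin J(x,t)$, and at $(\zeta,0)$ the objective defining $\Upsilon(v^*)$ converges uniformly in the same way to the objective defining $J(\zeta,0)$.

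Next I would invoke the elementary argmin-stability lemma: if $f_\epsilon\to f$ uniformly with $f$ lower semicontinuous, and $z_\epsilon\in\argmin f_\epsilon$ with $z_\epsilon\to\bar z$, then $\bar z\in\argmin f$; the one-line justification is that for any competitor $y$, $f(\bar z)\le\liminf_\epsilon f(z_\epsilon)=\liminf_\epsilon f_\epsilon(z_\epsilon)\le\limsup_\epsilon f_\epsilon(y)=f(y)$. Applied to $\Phi_\epsilon\to\Phi$, this shows that every element of the Kuratowski outer limit $\limsup_{\epsilon\downarrow0}(u,r)^*(x,t,\epsilon)$ lies in $\argmin\Phi=\argmin J(x,t)$. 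To make the lemma bite I would split on the binary control: since $r_\epsilon\in\{0,1\}$ converges to $\bar r$ it must equal $\bar r$ for all small $\epsilon$, so along the converging subsequence either the reset branch is selected (and $\Phi_\epsilon$ reduces to $v^*+\mathbb{E}[s(x,t,w)]$, whose limit is exactly the reset branch of $J(x,t)$) or the control branch is selected (and the lemma is applied to the inner $u$-minimization of $u\mapsto\mathbb{E}[g+\gamma V]$). Passing the branch-comparison inequality inside the outer $\min$ to the limit shows the selected branch remains optimal for $\Phi$, hence $(\bar u,\bar r)\in\argmin J(x,t)$; the state $(\zeta,0)$ is handled identically with $\Upsilon$ in place of the $V$-recursion.

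The main obstacle is verifying the regularity that the argmin-stability lemma presumes, namely lower semicontinuity of $\Phi$ and of $u\mapsto\mathbb{E}[g(x,t,u,w)+\gamma J(h(x,t,u,w),t+1)]$, together with the existence of the cluster points implicit in the $\limsup$. I would establish lower semicontinuity by a short backward induction on $t$: $J(\cdot,k)=J_0+\mathbb{E}[s(\cdot,k,w)]$ is continuous by continuity of $s$ and dominated convergence; the minimum of two lower semicontinuous functions is lower semicontinuous; and the partial minimization $\min_{u\in\mathcal{U}}$ preserves lower semicontinuity provided the inner minimizers stay in a compact set (for instance when $\mathcal{U}$ is compact, or $g$ is coercive in $u$). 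The same induction yields continuity of $V(\cdot,t,v^*)$. This is precisely the regularity needed both to guarantee that $\Phi$ attains its minimum and that uniform convergence transfers the minimizers, and it is the only point where anything beyond the nonnegativity and continuity already assumed in the model is implicitly required.
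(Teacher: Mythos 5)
Your proof is correct and shares the paper's skeleton --- uniform convergence of the approximating objectives from Theorem \ref{thm:bids}, lower semicontinuity of the limiting objective established by backward induction on $t$, and then a stability-of-minimizers step --- but it replaces the paper's key tool with a more elementary one. The paper invokes Proposition 7.15 of \cite{rockafellar2009} to upgrade uniform convergence plus lower semicontinuity into epi-convergence of the Bellman objectives, and then cites Theorem 7.33 of \cite{rockafellar2009} to transfer convergence to the argmins. You instead make the per-stage objectives $\Phi_\epsilon$ and $\Phi$ in the decision variables $(u,r)$ fully explicit, prove the argmin-stability lemma directly in one line, and handle the binary reset control by noting that a convergent sequence in $\{0,1\}$ is eventually constant. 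What your route buys is self-containedness and clarity about exactly which objects the $\argmin$ in the corollary statement refers to (the paper is somewhat loose on this); what the paper's route buys is brevity and a framework (epi-convergence) that extends naturally to approximate minimizers. Two small remarks. First, your bound $\sup_{u,r}|\Phi_\epsilon-\Phi|\le\gamma\epsilon$ should read $\le\epsilon$, since the reset branch contributes $|v^*-J_0|\le\epsilon$ rather than $\gamma\epsilon$; this is harmless because only convergence to zero is needed. Second, your observation that partial minimization over $u$ preserves lower semicontinuity only under an inf-compactness condition on $\mathcal{U}$ or coercivity of $g$ in $u$ correctly identifies an implicit hypothesis that the paper's own proof also relies on: its appeal to ``lsc is preserved under minimization'' likewise requires a level-boundedness condition that is nowhere stated in the model assumptions.
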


\begin{proof}
Theorem \ref{thm:bids} says $V(x,t,v^*)$ converges uniformly to $J(x,t)$, and that $\Upsilon(v^*)$ converges uniformly to $J(\zeta,0)$.  Next recall Fatou's lemma \cite{lieb2001analysis} implies lower semicontinuity (lsc) of nonnegative functions is preserved under expectation.  This means $V(x,k,v^*)$ is lsc in $x$.  Now suppose $V(x,t+1,v^*)$ is lsc in $x$.  Then $v^* + \mathbb{E}(s(x,t,w))$ and $\mathbb{E}[g(x,t,u,w) + \gamma V(h(x,t,u,w),t+1,v)]$ are lsc in $(x,u)$ by Fatou's lemma.  Since lsc is preserved under minimization \cite{rockafellar2009}, this means $V(x,t,v^*)$ is lsc in $x$.  By induction, $V(x,t,v^*)$ for $t=0,\ldots,k$ is lsc in $x$.  Hence, Proposition 7.15 of \cite{rockafellar2009} says the objectives of $V(x,t,v^*)$ and $\Upsilon(v^*)$ epi-converge.  So the result follows by Theorem 7.33 of \cite{rockafellar2009}.
\end{proof}

In words, the above result says that as $\epsilon\rightarrow 0$ then BiDS generates a sequence of $v^*$ such that the minimizers of $V(x,t,v^*)$ and $\Upsilon(v^*)$ converge to minimizers of $J(x,t)$.


\section{Numerical Solution of Water Storage Model}

\label{sect:nswsm}

In this section, we use BiDS to numerically solve the water storage model (\ref{eq:infinite_horz}) from Sect. \ref{sec:wat_stor_mode}. Computations were performed in MATLAB 2016b on a laptop computer with a 2.4GHz processor and 16GB of RAM. We generated random demand from the PRP model using Algorithm \ref{alg:rng} with Poisson rate $\lambda = 40$ (i.e., average of 40 water usage events for a family) and exponential rate $\mu = 2/\text{L}$ (i.e., average water usage event consumes $500$mL of water). The costs in Indian Rupees were 0.25/L, 0.5/L, 0.5/L for the purchasing, shortage, and flushing costs of water, respectively. The holding cost function in Indian Rupees was $q(t_n) = 15t_n$/L, which satisfies the monotonicity assumption and is high compared to the other costs to represent a sharp deterioration in the quality of water.
\begin{figure*}[t]
\includegraphics[trim={0.75in 0.5in 0.8in 0in},clip,scale=0.9]{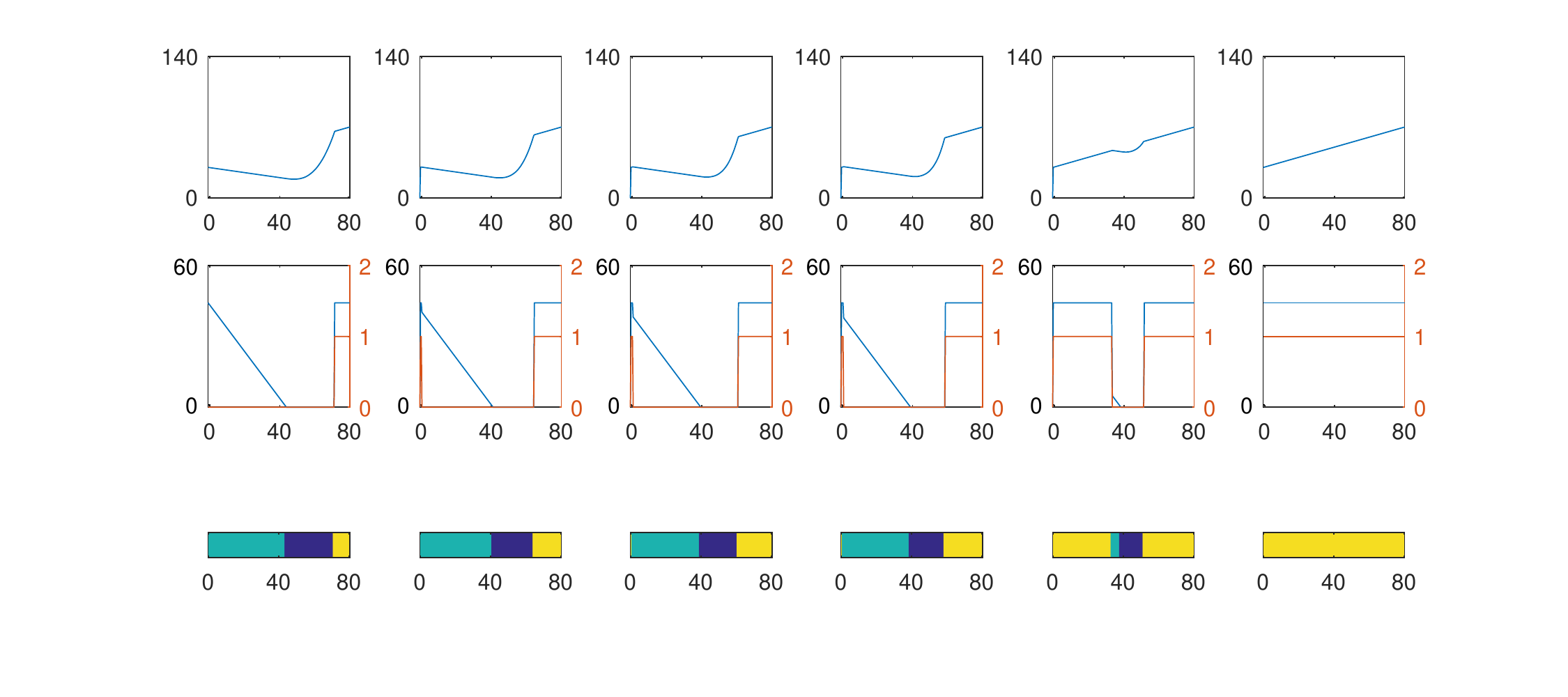}
\caption{The top row shows the value function $V(x,t,v^*)$ with $\epsilon=10^{-1}$, the second row shows $u^*(x,t)$ in blue and $f^*(x,t)$ in red, and the bottom row shows the different zones for which the optimal policy corresponds with cyan corresponding to ordering water and not flushing, dark blue to not doing anything, and yellow to flushing the tank and reordering water. For all plots the state $x$ (or the amount of water in the tank) is on the $x$-axis, and the subplots from left to right correspond to states $t =1,2,3,4,5,6$, respectively}
\label{fig:plots}
\end{figure*}

The BiDS algorithm was used to solve the local water storage problem with a maximum of $k = 6$ days between flushing and with a discount factor of $\gamma = 0.8$, and the results are shown in Fig. \ref{fig:plots}. These results show the optimal policy has a clear structure, and this control policy is easily implementable via a lookup table that could be distributed through paper pamphlets or internet websites.  At each $t$, there are at most three thresholds that separate the control policy into four regions. The first region is where the amount of water in the tank is so small that the flushing cost is negligible, and hence it is optimal to flush the tank and order a one step optimal quantity. Next is a region with enough water in the tank were it is optimal to order up to some quantity and still consume most of the water before it becomes unsafe to drink. Then is a do-nothing region where it is not optimal to flush the tank or order any more water since there is enough water in the tank to satisfy possible future consumption. Finally, there is a region where there is so much water in the tank that it is almost certain to never be consumed, and thus it is optimal to flush this water from the tank and reorder up to some optimal quantity.

\section{Conclusion}
In this paper, we studied the problem of managing local water storage in the developing world. We first presented a stochastic optimal control formulation of this problem, and gave an algorithm for sampling from a PRP model that describes daily water demand.  We showed this water storage problem can be generalized to stochastic optimal control problems with controlled resets to a single state and with constraints on the maximum time between resets. To solve this class of problems, we developed the BiDS algorithm that performs binary search to obtain the optimal value function and corresponding optimal policy. We showed this algorithm converges in finite time and is guaranteed to find the optimal value with $\epsilon$ precision. Finally, we concluded by applying the BiDS algorithm to numerical solve the problem of managing local water storage. Our results suggest that the optimal control policy for this problem has a well defined structure, which we leave to be studied in future work.
%


\bibliographystyle{IEEEtran}
\bibliography{IEEEabrv,waterstore}

\end{document}